
\documentclass{amsart}

\usepackage{amsmath,amssymb,amsthm}

\usepackage{pinlabel}

\hyphenation{mani-fold mani-folds sub-mani-fold sub-mani-folds topo-logy
Topo-logy geo-metry Geo-metry mono-dromy equi-valent}

\newtheorem{prop}{Proposition}
\newtheorem{thm}[prop]{Theorem}
\newtheorem{lem}[prop]{Lemma}

\theoremstyle{definition}

\newtheorem{rem}[prop]{Remark}


\def\co{\colon\thinspace}

\newcommand{\alphast}{\alpha_{\mathrm{st}}}

\newcommand{\rmd}{\mathrm{d}}

\newcommand{\ogamma}{\overline{\gamma}}

\newcommand{\R}{\mathbb{R}}

\newcommand{\xist}{\xi_{\mathrm{st}}}

\DeclareMathOperator{\curl}{curl}


\begin{document}

\author[T. Becker]{Tilman Becker}
\author[H.~Geiges]{Hansj\"org Geiges}
\address{Mathematisches Institut, Universit\"at zu K\"oln,
Weyertal 86--90, 50931 K\"oln, Germany}
\email{tibecker@math.uni-koeln.de, geiges@math.uni-koeln.de}

\thanks{H.~G. is partially supported by the SFB/TRR 191
`Symplectic Structures in Geometry, Algebra and Dynamics',
funded by the DFG}

\title[Line fibrations and contact structures]{The contact structure
induced by a line fibration of $\R^3$ is standard}

\date{}

\begin{abstract}
Building on the work of and answering a question by Michael Harrison,
we show that any contact structure on $\R^3$ induced by a line
fibration of $\R^3$ is diffeomorphic to the standard contact structure.
\end{abstract}

\keywords{}

\subjclass[2010]{53C12, 53D35}

\maketitle


\section{Introduction}
A  fibration of $\R^3$ by oriented lines (or \emph{line fibration},
for short) is described by a smooth vector field $V$ on $\R^3$ whose
integral curves are straight lines. We may write
\[ V=V_1\partial_x+V_2\partial_y+V_3\partial_z\]
and, assuming $V$ to be of unit length, regard
\[ V=(V_1,V_2,V_3)\co\R^3\longrightarrow S^2\subset\R^3\]
as a map into the unit $2$-sphere.
We write
\[  \ell_p=\bigl\{p+tV(p)\co t\in\R\bigr\}\]
for the line through $p$ defined by~$V$, and $\ell_p^{\perp}\subset\R^3$
for the affine plane through~$p$, orthogonal to~$\ell_p$.
We shall write $\{V\}$ for the line fibration defined by the vector field~$V$.
Any line fibration $\{V\}$ defines a tangent $2$-plane distribution $\xi$
on $\R^3$ by
\[ \xi_p=\langle V(p)\rangle^{\perp}\subset T_p\R^3.\]

Line fibrations of $\R^3$ were studied by M.~Salvai~\cite{salv09}. Building
on Salvai's work, M.~Harrison~\cite{harr19} showed that the $2$-plane
distribution $\xi$ defined by any non-degene\-rate line fibration $\{V\}$
is a tight contact structure, and hence (by Eliashberg's
classification) diffeomorphic (perhaps orientation-reversingly) to the
standard contact structure
\[ \xist=\ker(\rmd z+ x\,\rmd y)\]
on $\R^3$; see Section~\ref{section:nondeg} for the definition
of non-degeneracy. Harrison also gave a sufficient criterion for tightness
under the \emph{a priori} assumption that the induced
$2$-plane distribution $\xi$ be a contact structure.
He then posed the question whether there exists a line
fibration of $\R^3$ that induces an overtwisted contact structure, and
conjectured the answer to be no.

In the present note we confirm this conjecture. In fact, we show directly
(in the case not covered by Harrison's work)
that any contact structure arising from a line fibration is diffeomorphic
to the standard one.

\begin{thm}
\label{thm:main}
If a line fibration $\{V\}$ of $\R^3$ defines a contact structure~$\xi$,
then $\xi$ is diffeomorphic to~$\xist$.
\end{thm}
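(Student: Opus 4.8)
The plan is to produce an explicit diffeomorphism of $\R^3$ carrying $\xi$ to $\xist$, by normalising a natural contact form along its Reeb flow. Let $\alpha=V_1\,\rmd x+V_2\,\rmd y+V_3\,\rmd z$ be the $1$-form dual to $V$ via the Euclidean metric; by hypothesis $\alpha$ is a contact form with $\ker\alpha=\xi$. Since $\alpha(V)=|V|^2=1$ is constant, $\iota_V\,\rmd\alpha=\mathcal L_V\alpha=(\nabla_VV)^{\flat}$, and this vanishes because the integral curves of $V$ are straight lines, i.e.\ $\nabla_VV=0$. Hence $V$ is the Reeb vector field of $\alpha$, and, as $V$ is constant along each line, its flow is the complete, globally defined map $\phi_t(p)=p+tV(p)$, whose orbits are exactly the lines $\ell_p$.

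Suppose first that the fibration admits a \emph{global cross-section}: an embedded surface $S\subset\R^3$ transverse to every line and meeting each of them in exactly one point. Then $\Psi\co S\times\R\to\R^3$, $(s,t)\mapsto\phi_t(s)$, is a diffeomorphism --- it is a bijection because $S$ meets each Reeb orbit once, and a local diffeomorphism because $\rmd\phi_t$ carries $\langle V(s)\rangle$ to $\langle V(\phi_t(s))\rangle$, hence the complement $T_sS$ of the former to a complement of the latter. Since $\Psi_*\partial_t$ is the Reeb field of $\alpha$, we obtain $\Psi^*\alpha=\rmd t+\lambda$ with $\lambda$ pulled back from $S$, and the contact condition becomes the statement that $\rmd\lambda$ is an area form on $S\cong\R^2$ (positive, after an orientation-reversing change of coordinates on $S$ if need be). A shear $(s,t)\mapsto(s,\,t-f(s))$ replaces $\lambda$ by $\lambda-\rmd f$; so a diffeomorphism of $S$ normalising $\rmd\lambda$ (Moser's trick) followed by a suitable shear brings $\alpha$ to $\rmd t+x\,\rmd y$ on $\R^3$, i.e.\ identifies $\xi$ with $\xist$. (In the subcase where $\rmd\lambda$ has finite total area one instead identifies $\xi$ with the standard contact structure on an open subset of $\R^3$ that is itself diffeomorphic to $\R^3$; being tight, it is again $\xist$ by Eliashberg's classification.)

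It remains to produce the cross-section, and here is where the case not covered by Harrison lies. For a line fibration in Salvai's sense the leaf space is a smooth surface diffeomorphic to $\R^2$ and the quotient map $\pi\co\R^3\to\R^2$ is a locally trivial bundle with fibre $\R$; since the base is contractible, $\pi$ has a smooth section $\sigma$, and $S=\sigma(\R^2)$ is automatically embedded and transverse to the fibres (as $\rmd\pi\circ\rmd\sigma=\mathrm{id}$), i.e.\ is the desired cross-section --- this reproves tightness of $\xi$ in Harrison's non-degenerate setting. In a degenerate fibration one must secure the same fibre-bundle picture, and there the contact hypothesis does real work: the direction field $V$ can never be locally constant, since that would make $\xi$ integrable on an open set, which already excludes the crudest pathologies and should let one pin down the local structure along the degenerate locus and, where an appeal to the bundle structure alone is not enough, write the cross-section down by hand --- as one can for the model $V=(\cos z,\sin z,0)$ by taking on each line the point nearest the $z$-axis. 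I expect this last step --- constructing the cross-section for degenerate line fibrations, i.e.\ ruling out the singular or non-Hausdorff leaf spaces a general foliation of $\R^3$ by lines might have, using that $\{V\}$ is an honest fibration and that $\xi$ is contact --- to be the main obstacle; once it is available, the flow-box normalisation of the second paragraph is essentially formal.
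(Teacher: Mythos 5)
Your reduction is appealing and partly correct: the observation that $V$ is the Reeb vector field of $\alpha$ (since $\alpha(V)\equiv 1$ and $\iota_V\rmd\alpha=\mathcal{L}_V\alpha=(\nabla_VV)^{\flat}=0$ for straight-line integral curves) is sound, and the flow-box normalisation in your second paragraph --- $\Psi^*\alpha=\rmd t+\lambda$ with $\rmd\lambda$ an exact area form on $S\cong\R^2$, then Moser plus a shear, with the finite-area subcase handled by tightness and Eliashberg --- would indeed finish the proof \emph{if} a global cross-section $S$ were available. But the existence of such a cross-section is exactly the content of the theorem in the hard case, and you explicitly defer it (``I expect this last step \dots\ to be the main obstacle''). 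What you offer for that step is not an argument: the claim that the leaf space is a surface diffeomorphic to $\R^2$ and that $\pi\co\R^3\to\R^2$ is a locally trivial bundle is a nontrivial assertion (a priori the leaf space of a decomposition of $\R^3$ into disjoint lines could fail to be Hausdorff), it is established by Salvai/Harrison only in the skew/non-degenerate setting, and in the degenerate case you verify it only for the single model $V=(\cos z,\sin z,0)$. So the proposal has a genuine gap precisely where the paper does its work.

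For comparison, the paper closes this gap by a different dichotomy, not degenerate versus non-degenerate but: either some line of $\{V\}$ has no parallel partner --- then Harrison's Theorem~2 gives tightness directly, with no cross-section needed --- or every line has a parallel partner. In the latter case a short lemma (non-degeneracy at a point forbids parallel partners for that line, by a winding-number argument in $\ell_{p_0}^{\perp}$) forces $\rmd V$ to have rank~$1$ everywhere; integrating the line field $\ker\rmd V\cap\xi$ and using that its projections to the planes $\ell_{p_0}^{\perp}$ must be straight lines, one shows $V=(\cos\theta(z),-\sin\theta(z),0)$ with $\theta'\neq 0$ --- i.e.\ your guessed model is, up to coordinates, the \emph{only} possibility --- and then an explicit diffeomorphism pulls $\alphast$ back to $\alpha$. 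If you want to salvage your route, you would need to prove, for an arbitrary line fibration inducing a contact structure, that the leaf space is Hausdorff, diffeomorphic to $\R^2$, and that the quotient is a trivial $\R$-bundle; that is not easier than the paper's argument, and as it stands your proof is incomplete.
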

\section{Non-degenerate implies skew}
\label{section:nondeg}
A line fibration $\{V\}$ is called \emph{skew} if it does
not contain any distinct parallel lines. It is called \emph{non-degenerate}
if the differential $\rmd V$ vanishes only in the direction
of~$V$, in other words, if $\rmd_pV|_{\xi_p}$ is of rank~$2$
everywhere.

The plane field $\xi$ defined by the line fibration $\{V\}$ is given as
the kernel of the $1$-form
\[ \alpha=V_1\,\rmd x_1+V_2\,\rmd x_2+V_3\,\rmd x_3.\]
A straightforward computation shows that the contact condition
$\alpha\wedge\rmd\alpha\neq 0$ is equivalent to
\[ \langle V,\curl V\rangle\neq 0.\]

In \cite[Theorem~1]{harr19} it is shown that this condition is
satisfied whenever the line fibration is non-degenerate.

The following lemma, which says that non-degenerate line
fibrations are skew, is essentially \cite[Lemma~6]{salv09},
but we give a more specific statement and a more direct proof.

\begin{lem}
\label{lem:nondegenerate-skew}
If $\rmd_{p_0}V$ is of rank~$2$
at some point $p_0\in\R^3$, then
the line fibration $\{V\}$ does not contain any line $\ell\neq\ell_{p_0}$
parallel to $\ell_{p_0}$.
\end{lem}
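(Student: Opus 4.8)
I would argue by contraposition: assuming $\{V\}$ contains a line $\ell\ne\ell_{p_0}$ parallel to $\ell_{p_0}$, I will show that $\rmd_{p_0}V$ cannot have rank~$2$. Composing $V$ with an isometry of $\R^3$, I may assume $p_0=0$ and $V(0)=\partial_z$; since $V$ is constant along each of its integral lines, $\ell_{p_0}$ is the $z$-axis and $V\equiv\partial_z$ along it, so in particular $\partial_zV(0)=0$. A line parallel to $\ell_{p_0}$ is vertical, and being distinct from the $z$-axis it has the form $\{(a',b',s):s\in\R\}$ with $(a',b')\ne(0,0)$; a rotation about the $z$-axis (which fixes $V(0)=\partial_z$, preserves the line fibration, and does not change the rank of $\rmd_0V$) brings it into the form $\ell_q=\{(a,0,s):s\in\R\}$ with $a>0$. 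As $\rmd_0V$ maps $T_0\R^3$ to $T_{\partial_z}S^2=\xi_0=\langle\partial_x,\partial_y\rangle$ with $\partial_z\in\ker\rmd_0V$, it has rank~$2$ exactly when $A:=\rmd_0V|_{\xi_0}\co\xi_0\to\xi_0$ is invertible; writing $a_{ij}$ for its matrix in the basis $\partial_x,\partial_y$, we have $V_1=a_{11}x+a_{12}y+O(2)$ and $V_2=a_{21}x+a_{22}y+O(2)$ on $\{z=0\}$ near the origin, and it suffices to prove $\det A=0$.

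Next I would exploit that the integral lines of $V$ are pairwise disjoint, comparing $\ell_q$ with the lines through points of $\{z=0\}$ near~$0$. For $p=(x,y,0)$ close to the origin one has $V_3(p)>0$, so $\ell_p=\{p+tV(p):t\in\R\}=\{(x+tV_1(p),\,y+tV_2(p),\,tV_3(p)):t\in\R\}$, and a short computation with these parametrisations shows that, for such $p$, $\ell_p$ can meet $\ell_q$ only if $V_1(p)\ne0$ and $h(x,y):=yV_1(x,y)+(a-x)V_2(x,y)=0$; the only other way the two lines could intersect forces $x=a$ and is therefore excluded near~$0$. Since $\ell_p\ne\ell_q$ for $p$ near the origin, disjointness then yields the inclusion $\{h=0\}\subseteq\{V_1=0\}$ on a neighbourhood of $0$ in the plane $\{z=0\}$.

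Finally I would read off $\det A=0$ from this inclusion. From the expansions above, $h(x,y)=a\,(a_{21}x+a_{22}y)+O(2)$. If the first or the second row of $A$ vanishes, then $\det A=0$ directly. Otherwise $(a_{21},a_{22})\ne(0,0)$, so near $0$ the zero set $\{h=0\}$ is a smooth curve tangent to the line $\{a_{21}x+a_{22}y=0\}$; along this curve $V_1$ vanishes by the inclusion above, hence the differential $a_{11}\,\rmd x+a_{12}\,\rmd y$ of $V_1$ at $0$ annihilates the tangent vector $(-a_{22},a_{21})$, which is precisely the relation $a_{11}a_{22}-a_{12}a_{21}=0$. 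In every case $\det A=0$, so $\rmd_{p_0}V$ does not have rank~$2$, proving the contrapositive. I expect the main work to lie in the intersection computation of the second step — in particular a clean treatment of the degenerate configurations in which $V_1(p)$ or $V_2(p)$ vanishes — rather than in the final linear-algebra step; as a variant one can feed the disjointness of $\ell_{p_0}$ from its neighbouring lines into the same scheme to obtain a first semidefiniteness constraint on the $1$-jet of $V$ at~$0$, but disjointness from the single line $\ell_q$ is what actually forces $\det A$ to vanish.
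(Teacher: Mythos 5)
Your proof is correct, and it takes a genuinely different route from the paper's. The paper argues topologically: rank~$2$ of $\rmd_{p_0}V$ makes $V$ a local diffeomorphism from a small disc $D^2_{\varepsilon}\subset\ell_{p_0}^{\perp}$ onto a neighbourhood of $V(p_0)$ in $S^2$, so the orthogonal projection $W$ of $V$ to $\ell_{p_0}^{\perp}$ is nonvanishing on the punctured disc and, since it can never point along $p-p_0$, has winding number~$1$ along $\partial D^2_{\varepsilon}$; the vector $p-p_1$ pointing from a putative parallel line has winding number~$0$, so the two must be proportional at some boundary point, producing a forbidden intersection. You instead prove the contrapositive by a purely local first-order computation: disjointness of $\ell_q$ from the lines through nearby points $p\in\ell_{p_0}^{\perp}$ translates (via the correct observation that, where $V_3>0$ and $x\neq a$, the lines $\ell_p$ and $\ell_q$ meet exactly when $V_1(p)\neq 0$ and $h(p)=0$) into the inclusion $\{h=0\}\subseteq\{V_1=0\}$, whose linearisation at the origin is precisely $\det\bigl(\rmd_{p_0}V|_{\xi_{p_0}}\bigr)=0$; the case split on whether $(a_{21},a_{22})$ vanishes, with the implicit function theorem and differentiation of $V_1$ along the curve $\{h=0\}$ in the nondegenerate case, is complete and sound. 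Your argument avoids degree theory entirely and gives the slightly sharper quantitative statement that even a single parallel line forces the algebraic relation $\det A=0$ on the $1$-jet of $V$ at~$p_0$, whereas the paper's winding-number argument is more geometric and explains why at least two intersection points must appear; both establish the lemma.
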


\begin{proof}
By assumption, the restriction of the differential $\rmd_{p_0}V$
to the plane $\xi_{p_0}\subset T_{p_0}\R^3$ defines an isomorphism
$\xi_{p_0}\rightarrow T_{V(p_0)}S^2$.
It follows that we can find a small disc
$D^2_{\varepsilon}\subset\ell_{p_0}^{\perp}$
about $p_0$ that is mapped diffeomorphically by $V$ onto a small
neighbourhood of $V(p_0)$ in~$S^2$. In particular,
the map $V$ never attains the value $\pm V(p_0)$
on~$D^2_{\varepsilon}\setminus\{p_0\}$.

This means that the
vector $W(p)$ obtained by projecting $V(p)$ orthogonally to
$\ell_{p_0}^{\perp}$ is non-zero for
$p\in D^2_{\varepsilon}\setminus\{p_0\}$. We also know that $W(p)$ cannot
point along the line through $p_0$ and $p$, or else $\ell_p$ would
intersect~$\ell_{p_0}$. This implies that $W(p)$ completes exactly one
positive turn as $p$ ranges along $\partial D^2_{\varepsilon}$, see
Figure~\ref{figure:parallel-lines}, which shows the situation
in the affine plane~$\ell_{p_0}^{\perp}$.

\begin{figure}[h]
\labellist
\small\hair 2pt
\pinlabel $p_0$ [bl] at 267 146
\pinlabel $p_1$ [tl] at 491 341
\pinlabel $p$ [br] at 139 147
\pinlabel $W(p)$ [tl] at 180 168
\endlabellist
\centering
\includegraphics[scale=.6]{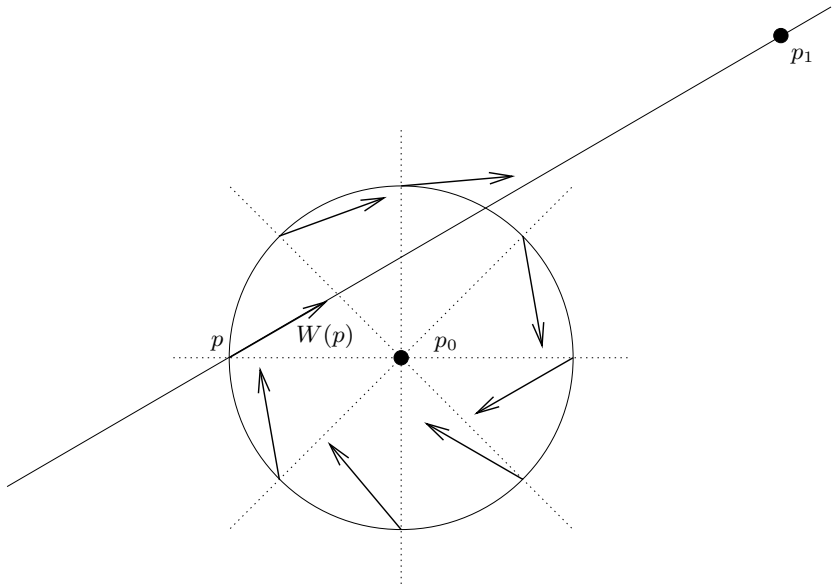}
  \caption{Non-degenerate implies skew.}
  \label{figure:parallel-lines}
\end{figure}

Now suppose there was a line $\ell_{p_1}\neq\ell_{p_0}$ parallel to
$\ell_{p_0}$ in the line fibration $\{V\}$. We may take $p_1$ to be the
intersection point of this line with~$\ell_{p_0}^{\perp}$. Notice that $p_1$
has to lie outside the $2$-disc $D^2_{\varepsilon}$.
As $p$ ranges along $\partial D^2_{\varepsilon}$, the vector
$p-p_1$ never completes a full turn.

It would follow that there have to be at least two points
$p\in\partial D^2_{\varepsilon}$ where $W(p)$ is a scalar multiple
of $p-p_1$, which would imply that $\ell_p$ intersects~$\ell_{p_1}$,
contradicting the assumption that $V$ defines a line fibration.
\end{proof}
\section{Proof of Theorem~\ref{thm:main}}
We begin with a simple lemma.

\begin{lem}
\label{lem:parallel}
If the line fibration $\{V\}$ defines a contact structure, and for every line
$\ell\in\{V\}$ there exists a parallel line $\ell'\neq\ell$ in~$\{V\}$,
then $\rmd V$ has rank~$1$ at every point of~$\R^3$.
\end{lem}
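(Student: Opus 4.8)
The plan is to establish the two bounds $\operatorname{rank}\rmd_pV\le 1$ and $\operatorname{rank}\rmd_pV\ge 1$ at every point $p\in\R^3$ separately. Throughout, recall that $V$ is constant along each fibre $\ell_p$: any two points lying on a common line of the fibration define that same line, and $V$ is a continuous unit vector field along the connected set~$\ell_p$, so it cannot jump from $V(p)$ to $-V(p)$. Consequently $V(p)$ lies in the kernel of $\rmd_pV\co T_p\R^3\to T_{V(p)}S^2$; in particular $\operatorname{rank}\rmd_pV\le 2$ automatically, and the rank of $\rmd_pV$ equals the rank of its restriction $\rmd_pV|_{\xi_p}$.

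For the upper bound I would argue by contradiction. If $\rmd_{p_0}V$ had rank~$2$ at some point~$p_0$, then Lemma~\ref{lem:nondegenerate-skew} would guarantee that $\{V\}$ contains no line $\ell\neq\ell_{p_0}$ parallel to~$\ell_{p_0}$; but this contradicts the hypothesis of the lemma, applied to the line $\ell_{p_0}\in\{V\}$. Hence $\operatorname{rank}\rmd_pV\le 1$ for all~$p$.

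For the lower bound I would invoke the contact condition. By the computation recalled in Section~\ref{section:nondeg}, $\xi$ being a contact structure is equivalent to $\langle V,\curl V\rangle$ being nowhere zero, so $\curl V$ vanishes nowhere. Since the entries of $\curl V$ are the differences $\partial_jV_i-\partial_iV_j$ of first-order partial derivatives of the components of~$V$ (that is, $\curl V$ is, up to sign and reindexing, the antisymmetric part of the Jacobian of~$V$), a point where $\rmd V=0$ would be a point where $\curl V=0$, which is impossible. Thus $\operatorname{rank}\rmd_pV\ge 1$ for all~$p$, and combining this with the previous paragraph yields $\operatorname{rank}\rmd_pV=1$ everywhere.

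I do not expect a genuine obstacle: the substantive work has already been carried out, in Lemma~\ref{lem:nondegenerate-skew} and in the reformulation of the contact condition, and the present lemma merely assembles these two facts. The only points that need any care are the elementary observations that $V(p)\in\ker\rmd_pV$ and that $\rmd_pV=0$ forces $\curl V(p)=0$.
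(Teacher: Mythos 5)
Your proof is correct and follows essentially the same route as the paper: the upper bound on the rank comes from Lemma~\ref{lem:nondegenerate-skew} combined with the parallel-lines hypothesis, and the lower bound from the contact condition forcing the first derivatives of $V$ not to vanish simultaneously. The paper phrases the latter step via $\rmd\alpha=\sum_i\rmd V_i\wedge\rmd x_i$ rather than via $\curl V$, but this is the same computation in different notation.
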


\begin{proof}
If $\alpha=V_1\,\rmd x_1+V_2\,\rmd x_2+V_3\,\rmd x_3$ is a contact
form, then $\rmd V$ is nowhere trivial, since
$\rmd\alpha=\sum_i\rmd V_i\wedge\rmd x_i$. On the other hand, the
assumption on parallel lines implies with Lemma~\ref{lem:nondegenerate-skew}
that the rank of $\rmd V$ can nowhere be equal to~$2$.
\end{proof}

\begin{proof}[Proof of Theorem~\ref{thm:main}]
As shown by Harrison~\cite[Theorem~2]{harr19}, if there is a line in
$\{V\}$ not parallel to any other line in~$\{V\}$, then
$\xi$ is tight, and hence diffeomorphic to $\xist$
by Eliashberg's classification~\cite{elia92},
cf.~\cite[Theorem~4.10.1]{geig08}.

It remains to consider the case when
for every line $\ell\in\{V\}$ there exists a parallel
line $\ell'\neq\ell$ in~$\{V\}$.
We are then in the situation of Lemma~\ref{lem:parallel},
so $\rmd V$ has rank~$1$ at every point of~$\R^3$.

Thus, $\ker\rmd V\cap\xi$ defines a line field on $\R^3$. This
line field is necessarily trivial, and we choose
a vector field $X$ of constant length~$1$ spanning it.
Any flow line of $X$ stays within a compact region in finite time.
It follows that $X$ has a global flow, defined for all times.

Fix a point $p_0\in\R^3$ and consider the maximal flow line
$\gamma_0\co\R\rightarrow\R^3$ of $X$ through $p_0=\gamma_0(0)$.
Then $V(\gamma_0(t))=V(p_0)=:V_0$ for all $t\in\R$, since
\[ \rmd_{\gamma_0(t)} V\bigl((\gamma_0'(t)\bigr)=
\rmd_{\gamma_0(t)} V\bigl(X(\gamma_0(t))\bigr)=0.\]

Write $\pi\co\R^3\rightarrow\ell_{p_0}^{\perp}$ for the orthogonal projection
of $\R^3$ along $V_0$ onto the affine plane $\ell_{p_0}^{\perp}$.
The projected curve $\ogamma_0=\pi\circ\gamma_0$ is regular, since
$X$ is orthogonal to~$V_0$ along~$\gamma_0$. Notice that $V$ is also
constant equal to $V_0$ along~$\ogamma_0$.

We claim that $\ogamma_0$ has to be a straight line. Otherwise,
$\ogamma_0$ would have non-vanishing curvature at some
point $\ogamma_0(t_0)$. Then, for $\delta>0$ sufficiently small,
the arc $A=\ogamma_0(I_0)$, where $I_0=[t_0-\delta,t_0+\delta]$,
would lie to one side of the secant $S$ joining $\ogamma_0(t_0-\delta)$
with~$\ogamma_0(t_0+\delta)$. It follows that $V(q)=V_0$ also for
all $q\in S$, or else $\ell_q$ would intersect one of the $\ell_p$
with $p\in A$. So the lines in $\{V\}$ through
$A$ and $S$ form a straight cylinder.
But then in fact $V\equiv V_0$ on the whole disc-like region
in $\ell_{p_0}^{\perp}$ bounded by $A$ and~$S$, which would mean
that $\rmd V\equiv 0$ there, contradicting the fact
that $\rmd V$ has rank~$1$ everywhere.

Hence, the $V$-lines through $\ogamma_0$ form an affine $2$-plane.
Choose coordinates on $\R^3$ such that this plane
coincides with the $xy$-plane, $\ogamma_0$ with the $y$-axis,
and $V_0=\partial_x$ (hence $V=\partial_x$ along the $xy$-plane).
Then all lines in $\{V\}$ must lie in affine planes parallel
to the $xy$-plane, or else there would be an intersection of lines,
and the lines in each of these horizontal planes must be parallel.
It follows that
\[ V(x,y,z)=\bigl(\cos \theta(z),-\sin \theta(z), 0\bigr)\]
for some smooth function $\theta\co\R\rightarrow\R$,
with $\theta(0)=0$ and, since $\rmd V$ has rank~$1$, satisfying
$\theta'(z)\neq 0$ for all $z\in\R$.

A diffeomorphism of $\R^3$ pulling back
\[ \alpha=\cos\theta(z)\,\rmd x-\sin\theta(z)\,\rmd y\]
to the standard contact form $\alphast=dz+x\,\rmd y$ is given by
\[ (x,y,z)\longmapsto
\Bigl(z\cos\theta(y)+\frac{x}{\theta'(y)}\sin\theta(y),
-z\sin\theta(y)+\frac{x}{\theta'(y)}\cos\theta(y),y\Bigr). \]
This completes the proof of Theorem~\ref{thm:main}.
\end{proof}

\begin{rem}
Observe that this argument yields another sufficient condition
for a line fibration of $\R^3$ to induce a contact structure: if $\rmd V$
has constant rank~$1$, then $\{V\}$ defines the standard contact structure
$\xist$ on~$\R^3$.
\end{rem}
\end{document}